\documentclass[a4paper,12pt]{amsart}
\usepackage[colorlinks,linkcolor=blue,citecolor=blue]{hyperref}
\usepackage{latexsym, amssymb, amsmath, amscd, amsthm, mathrsfs, bbm}
\usepackage[all, knot]{xy}
\xyoption{arc}

\usepackage{anysize}\marginsize{22mm}{22mm}{25mm}{25mm}
\addtolength{\parskip}{6pt}
\allowdisplaybreaks[4]

\def \k {\mathbbm{k}}

\def \dim {\operatorname{dim}}

\def \C {\mathbbm{C}}

\numberwithin{equation}{section}
\numberwithin{table}{section}
\numberwithin{equation}{section}
\newtheorem{theorem}{Theorem}[section]

\newtheorem{proposition}[theorem]{Proposition}

\newtheorem{example}[theorem]{Example}

\newtheorem{remark}[theorem]{Remark}

\begin{document}

\title{Solving cubic equations by completing the cube and higher degree equations by completing powers}
\thanks{Supported by NSFC 11971181 and 11971449.}

\subjclass[2010]{12D10, 11E76, 15A69}

\keywords{algebraic equation, radical solution, completing power}

\author{Hua-Lin Huang, Shengyuan Ruan, Xiaodan Xu and Yu Ye}

\address{School of Mathematical Sciences, Huaqiao University, Quanzhou 362021, China}
\email{hualin.huang@hqu.edu.cn}

\address{School of Mathematical Sciences, Huaqiao University, Quanzhou 362021, China}
\email{shengyuan.ruan@stu.hqu.edu.cn}

\address{School of Mathematical Sciences, Huaqiao University, Quanzhou 362021, China}
\email{xiaodan.xu@stu.hqu.edu.cn}

\address{School of Mathematical Sciences, Wu Wen-Tsun Key Laboratory of Mathematics, University of Science and Technology of China, Hefei 230026, China}
\email{yeyu@ustc.edu.cn}

\date{}                                           

\maketitle


\begin{abstract}
We derive the Cardano formula of cubic equations by completing the cube, and provide radical solutions to some algebraic equations of higher degree by completing powers. The main idea of completing powers arises from Harrison's center theory of higher degree forms. A very simple criterion for such algebraic equations is presented, and the computation amounts to solving linear equations and quadratic equations.
\end{abstract}

\vskip 20pt

\section{Introduction}

Solving algebraic equations is a key problem throughout the whole history of mathematics. The radical solution to a quadratic equation was found by the Babylonians 1500 BC. Three thousands years later, cubic and quartic equations were solved in terms of radicals by Italian mathematicians. After the works of Ruffini, Abel and Galois, it is a common knowledge that a general algebraic equation of degree at least $5$ has no radical solution. See \cite{vdw} for a history. Theoretically, an equation is solvable by radical if and only if its Galois group is solvable. In practice, however, it is very hard to determine whether an equations is solvable by radical.

This short article first comes out of the frequently asked question: can one solve cubic equations by completing the cube? There have been a lot of attempts in the literature. For example, Kung and Rota found in \cite{kr} radical solutions to cubic equations by the classical invariant theory of binary forms, where the completion of cubes is by virtue of canonical forms of binary cubics. Recently, in \cite{w} Wallach found a linear fractional transformation that completes the cube for a cubic by geometric invariant theory. The aforementioned works involve complicated computations. The key approach of  the present article is an elementary method of completing powers for higher degree forms based on Harrison's theory of centers \cite{h1, hllyz, hlyz, hlyz2}. Certainly this can be applied to find radical solutions of some algebraic equations. 

Let $f(x)=a_0x^d+{d \choose 1}a_1x^{d-1}+\cdots+{d \choose d-1}a_{d-1}x+a_d=0$ be a complex algebraic equation of degree $d>2$ and let $F(x,y)=a_0x^d+{d \choose 1}a_1x^{d-1}y+\cdots+{d \choose d-1}a_{d-1}xy^{d-1}+a_dy^d$ be its homogenization. Let $H$ be the hessian matrix of $F(x,y).$ The center $Z(F):=\{ X \in \C^{2 \times 2} \mid (HX)^T=HX \}$ is a subalgebra of the full matrix algebra $\C^{2 \times 2}.$ It turns out that $Z(F) \cong \C \times \C$ if and only if $F(x,y)=(\alpha_1 x + \beta_1 y)^d + (\alpha_2 x + \beta_2 y)^d,$ and if and only if $f(x)=(\alpha_1 x + \beta_1)^d + (\alpha_2 x + \beta_2)^d,$ for some $\alpha_i$ and $\beta_j$ with $\alpha_1\beta_2-\alpha_2\beta_1 \ne 0.$ If this is the case, then $f(x)=0$ is solvable by radical and the roots are easily obtained. Moreover, the completion of powers is fully indicated by the center and the computation is elementary involving only solving simple linear equations and quadratic equations. 

The article is organized as follows. In Section 2 we recall Harrison's theory of centers and its application to completing powers of polynomials. Then we apply the center theory to derive the Cardano formula of cubic equations in Section 3, and to solve some higher degree algebraic equations by completing powers in Section 4.

\section{Harrison centers and completing powers}

Throughout this section, let $d>2$ be an integer and $\k$ be a field of characteristic $0$ or $>d.$ As preparation, we recall the notion of center algebras of homogeneous polynomials and its application to some criterion and algorithm of completing powers.

Let $f(x_1, x_2, \dots, x_n) \in \k[x_1, \dots, x_n]$ be a homogeneous polynomial of degree $d.$ The center of $f=f(x_1, x_2, \dots, x_n)$ was introduced by Harrison \cite{h1} as follows:
$$Z(f):=\{ X \in \k^{n \times n} \mid (HX)^T=HX \},$$ where $H=(\frac{\partial^2 f}{\partial x_i \partial x_j})_{1 \le i,\ j \le n}$ is the hessian matrix.

\begin{example} \label{sp}
\emph{Consider the sum of powers $f=x_1^d+x_2^d+\cdots+x_n^d.$ Then the hessian matrix of $f$ is diagonal with $ii$-entry $d(d-1)x_i^{d-2}.$ Thus the $ij$-entry of $HX$ reads $d(d-1)x_i^{d-2}c_{ij}$ for $X=(c_{ij}).$ According to the symmetric condition of $HX,$ it is direct to see that $Z(f)$ consists of all diagonal matrices. Thus $Z(f)$ is isomorphic to $\k^n$ as algebras.}
\end{example}

We call a homogeneous polynomial nondegenerate, if no variable can be removed by an invertible linear change of its variables. It is well known that a homogeneous polynomial $f$ is nondegenerate if and only if its first-order differentials $\frac{\partial f}{\partial x_i}$ are linearly independent \cite{h1}. Moreover, any homogeneous polynomial is the sum of a nondegenerate polynomial and a zero polynomial. We say that $f(x_1, x_2, \dots, x_n)$ is diagonalizable over $\k$ if there exists an invertible $\k$-linear change of variables $x=Py$ such that 
\begin{equation*} \label{df}
f(x)=f(Py)=\lambda_1y_1^d+\lambda_2y_2^d+\cdots+\lambda_ry_r^d,
\end{equation*}
where $\lambda_i \in \k.$ Note that $r$ is necessarily less than or equal to $n$ here.

The center turns out to be a very effective invariant for deciding whether a homogeneous polynomial is diagonalizable. The following proposition was essentially obtained in \cite{h1}, see also \cite{hllyz, hlyz, hlyz2}.

\begin{proposition} \label{cd}
Suppose $f \in \k[x_1, x_2, \dots, x_n]$ is a nondegenerate homogeneous polynomial of degree $d>2.$ Then $Z(f)$ is a commutative algebra, and $f$ is diagonalizable over $\k$ if and only if $Z(f) \cong \k^n$ as algebras.
\end{proposition}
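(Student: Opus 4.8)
The plan is to recast the defining relation of $Z(f)$ in terms of the symmetric trilinear form attached to $f$, and to read off both commutativity and the diagonalizability criterion from this reformulation. Following Example~\ref{sp}, I read the defining relation as the condition that $HX$ be symmetric, equivalently $X^{T}H=HX$. Introduce the third polar $T(u,v,w)=\sum_{i,j,k}\frac{\partial^{3}f}{\partial x_i\partial x_j\partial x_k}\,u_iv_jw_k$, which is a \emph{symmetric} trilinear form because the mixed third partials of $f$ commute. Since $H_{ij}$ is homogeneous of degree $d-2$, Euler's identity gives $T(u,v,x)=(d-2)\,u^{T}Hv$, so the symmetric bilinear form $\beta_x(u,v)=u^{T}Hv$ is just $T(u,v,x)/(d-2)$. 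First I would verify that $X\in Z(f)$ if and only if $\beta_x(Xu,v)=\beta_x(u,Xv)$ for all $u,v,x$, which by the preceding identity is equivalent to $T(Xu,v,w)=T(u,Xv,w)$ for all $u,v,w$. As $T$ is already symmetric in its last two slots, this says precisely that $(u,v,w)\mapsto T(Xu,v,w)$ is fully symmetric; equivalently, for $X\in Z(f)$ the operator $X$ may be moved freely among the three arguments of $T$, i.e.\ $T(Xu,v,w)=T(u,Xv,w)=T(u,v,Xw)$.

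For commutativity, take $X,Y\in Z(f)$ and shuffle the two operators through the slots of $T$. Moving $X$ out and then $Y$ gives $T(XYu,v,w)=T(Yu,Xv,w)=T(u,Xv,Yw)$, while moving the operators the other way gives $T(u,Xv,Yw)=T(Xu,v,Yw)=T(YXu,v,w)$; hence $T\big((XY-YX)u,v,w\big)=0$ for all $u,v,w$. It remains to promote this to $XY=YX$, and this is where nondegeneracy enters. Setting $p=(XY-YX)u$, the vanishing of $T(p,\,\cdot\,,\,\cdot\,)$ means that every second partial of the directional derivative $D_pf=\sum_i p_i\partial_i f$ vanishes; as $D_pf$ is homogeneous of degree $d-1\ge 2$ (here the hypothesis $\operatorname{char}\k=0$ or $>d$ is used), this forces $D_pf=0$, and nondegeneracy—equivalently, linear independence of the $\partial_i f$—gives $p=0$. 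Thus $XY=YX$. The same self-adjointness bookkeeping shows that a product of two elements of $Z(f)$ lies in $Z(f)$ exactly when they commute, so commutativity simultaneously yields that $Z(f)$ is a unital commutative subalgebra.

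For the criterion, note first that a linear change of variables $x=Py$ replaces $H$ by $P^{T}H(Py)P$ and hence conjugates the center, $Z(f\circ P)=P^{-1}Z(f)P$; in particular $Z$ is a linear-change invariant up to algebra isomorphism. If $f$ is diagonalizable then, after such a change, $f=\sum_{i=1}^{n}\lambda_i x_i^{d}$ with all $\lambda_i\ne 0$ (all $n$ variables occur, by nondegeneracy), and the computation of Example~\ref{sp} gives $Z(f)\cong\k^{n}$. Conversely, suppose $Z(f)\cong\k^{n}$. Then $Z(f)$ contains $n$ orthogonal idempotents $e_1,\dots,e_n$ with $\sum_i e_i=\Id$; being orthogonal projections summing to the identity on $\k^{n}$, each has rank one, and being commuting idempotents they are simultaneously diagonalizable. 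Choosing $P$ so that $P^{-1}e_iP=E_{ii}$ and passing to $f\circ P$, I may assume $E_{ii}\in Z(f)$ for every $i$. The plan is then to show this forces $H$ to be diagonal: writing out that $HE_{ii}$ is symmetric yields $H_{ki}=0$ for every $k\ne i$, and letting $i$ range over all indices kills every off-diagonal entry of $H$. A diagonal Hessian means $\partial_i\partial_jf=0$ for $i\ne j$, so $f$ has no mixed monomials and, being homogeneous of degree $d$, equals $\sum_i\lambda_i x_i^{d}$; thus $f$ is diagonalizable.

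The main obstacle is the commutativity statement: the crux is the reformulation turning membership in $Z(f)$ into full symmetry of $T(X\,\cdot\,,\,\cdot\,,\,\cdot\,)$, which is exactly what makes the slot-shuffling legal, together with the nondegeneracy argument that upgrades $T((XY-YX)u,\,\cdot\,,\,\cdot\,)=0$ to $XY=YX$. By comparison, the diagonalizability criterion is largely mechanical once the conjugation-invariance of the center and the idempotent decomposition are in hand.
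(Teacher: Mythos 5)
Your proposal is correct, and on the diagonalizability criterion it follows essentially the same route as the paper: the same conjugation identity $Z(f\circ P)=P^{-1}Z(f)P$, the same reduction of the forward direction to Example \ref{sp}, and the same converse via a complete set of orthogonal primitive idempotents, simultaneous diagonalization to $E_{ii}$, and the observation that $E_{ii}\in Z$ forces a diagonal Hessian (you add a small justification the paper omits, namely that each $e_i$ has rank one because the images of the orthogonal idempotents decompose $\k^n$). Where you genuinely go beyond the paper is the commutativity assertion: the paper does not prove it at all, deferring to Harrison, whereas you give a self-contained argument via the third polar $T$, showing the center condition lets $X$ migrate through all three slots of $T$, deducing $T((XY-YX)u,v,w)=0$, and then using the characteristic hypothesis (so $(d-1)(d-2)\ne 0$ in $\k$) together with the linear independence of the $\partial f/\partial x_i$ to force $XY=YX$; this is in the spirit of Harrison's original polarization argument and makes the proposition self-contained. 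One step you assert without proof deserves a line: the pointwise condition $\beta_x(Xu,v)=\beta_x(u,Xv)$ for all $x$ only specializes the trilinear identity at $w=x$, and to get $T_x(Xu,v,w)=T_x(u,Xv,w)$ for arbitrary $w$ you should differentiate the polynomial identity $(Xu)^TH(x)v=u^TH(x)Xv$ in the direction $w$ (the entries of $H$ being polynomials in $x$), with Euler's identity giving the converse by setting $w=x$; this is routine, not a gap. Note finally that you correctly read the paper's displayed defining relation $(HX)^T=XH$ as the symmetry condition $X^TH=HX$, i.e.\ that $HX$ be symmetric, which is the condition the paper itself actually uses in Example \ref{sp} and in its proof.
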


\begin{proof}
For the convenience of the reader, we include a sketchy proof for the last assertion. If $f$ is nondegenerate and diagonalizable, then there is a change of variable $x=Py$ such that 
\[ g(y):=f(Py)=\lambda_1y_1^d+\lambda_2y_2^d+\cdots+\lambda_ny_n^d \] with $\lambda_i \in \k$ and $\prod_{i=1}^n \lambda_i \ne 0.$ Let $G$ denote the hessian matrix $(\frac{\partial^2g}{\partial y_i \partial y_j})$ of $g$ with respect to the variables $y_i.$ Then by the chain rule of derivative, it is easy to see that $G=P^THP,$ where $H=(\frac{\partial^2 f}{\partial x_i \partial x_j}).$ Therefore, \[(GY)^T=GY \Leftrightarrow (HPYP^{-1})^T=HPYP^{-1}.\] It follows that $Z(f) = PZ(g)P^{-1}.$ As Example \ref{sp}, it is ready to see that $Z(g)$ consists of all diagonal matrices. Hence $Z(f)$ is isomorphic to $\k^n$ as algebras.

Conversely, suppose $Z(f) \cong \k^n$ as algebras. Then in the center algebra $Z(f)$ there exists a complete set of orthogonal primitive idempotents, say $e_1, e_2, \dots, e_n.$ As the $e_i$ are diagonalizable under conjugation and they mutually commute, there exists an invertible matrix $P \in \k^{n \times n}$ such that $P^{-1}e_iP=E_{ii}$ for all $1 \le i \le n,$ where $E_{ii}$ is the canonical idempotent matrix with $ii$-entry $1$ and $0$ otherwise. Now take the change of variables $x=Py,$ and denote $g(y)=f(Py).$ Note that $Z(g)=P^{-1}Z(f)P,$ it follows that $E_{ii} \in Z(g).$ By the definition of center algebras, it is direct to see that the hessian matrix of $g$ is diagonal. It follows easilly that $g(y)=\lambda_1y_1^d+\lambda_2y_2^d+\cdots+\lambda_ny_n^d,$ i.e. $f(x)$ is diagonalizable.
\end{proof}

\begin{remark}
\emph{The previous proposition provides a criterion for completing powers of multivariate homogeneous polynomials. In addition, the proof also contains an algorithm for the process of completing powers for diagonalizable polynomials.}
\end{remark}

We conclude this section with an example to elucidate the algorithm of completing powers.

\begin{example}
\emph{Consider the following ternary cubic
\begin{eqnarray*} f(x_1, x_2, x_3)&=&
x_1^3 + 3 x_2 x_1^2 + 3 x_3 x_1^2 + 3 x_2^2 x_1 + 3 x_3^2 x_1 \\ &+& 6 x_2 x_3 x_1 - x_2^3 + 20 x_3^3 - 21 x_2 x_3^2 + 15 x_2^2 x_3.
\end{eqnarray*}
Suppose $X=(c_{ij}) \in Z(f).$ The condition $(HX)^T=HX$ can be translated into a system of linear equations in $x_{ij}.$ A general solution to the system of linear equations reads $$X=a\begin{pmatrix} 1 & 1 & 1\\ 0 & 0 & 0\\ 0 & 0 & 0 \end{pmatrix}+b\begin{pmatrix} 0 & -1 & 2\\ 0 & 1 & -2\\ 0 & 0 & 0 \end{pmatrix}+ c\begin{pmatrix} 0 & 0 & -3\\ 0 & 0 & 2\\ 0 & 0 & 1 \end{pmatrix}.$$ Let $P=\begin{pmatrix} 1 & -1 & -3\\ 0 & 1 & 2\\ 0 & 0 & 1 \end{pmatrix}.$ By direct verification, we have
\[ P^{-1}\begin{pmatrix} 1 & 1 & 1\\ 0 & 0 & 0\\ 0 & 0 & 0 \end{pmatrix}P=E_{11}, \ \ P^{-1}\begin{pmatrix} 0 & -1 & 2\\ 0 & 1 & -2\\ 0 & 0 & 0 \end{pmatrix}P=E_{22}, \ \ P^{-1}\begin{pmatrix} 0 & 0 & -3\\ 0 & 0 & 2\\ 0 & 0 & 1 \end{pmatrix}P=E_{33}. \] Now take a change of variables $x=Py,$ and by direct computation we have 
\[ f(Py)=y_1^3-2y_2^3+3y_3^3.\] Note that $y_1=x_1 + x_2 + x_3, \ y_2=x_2 - 2 x_3, \ y_3=x_3,$ therefore 
\[ f(x_1, x_2, x_3)=(x_1 + x_2 + x_3)^3 - 2 (x_2 - 2 x_3)^3 + 3 x_3^3. \]}
\end{example}

\section{Cardano formula revisited by completing the cube}
In this section, we derive the well-known Cardano formula of cubic equations by completing the cube. Let \begin{equation} \label{cubic} a_0x^3+3a_1x^2+3a_2x+a_3=0 \end{equation} be a general cubic equation over the field $\C$ of complex numbers. Similar to the situation of quadratic equations but going a bit further, we aim to express cubic equations as the sum of two cubes of linear binomials.  That is, we try to find an identity as the following \begin{equation} \label{sc} f(x)=a_0x^3+3a_1x^2+3a_2x+a_3=(\alpha_1 x +\beta_1)^3 + (\alpha_2 x +\beta_2)^3. \end{equation} If such an identity is found, then the solution to \eqref{cubic} is easily obtained by taking the cubic roots of the both sides of $(\alpha_1 x +\beta_1)^3 =-(\alpha_2 x +\beta_2)^3.$

Thanks to the theory of centers, we have effective criterion and algorithm of completing the cubes for a cubic equation as \eqref{sc}. In order to apply Proposition \ref{cd}, we homogenize cubic equations as binary cubics \begin{equation} \label{bc} F(x,y)=a_0x^3+3a_1x^2y+3a_2xy^2+a_3y^3. \end{equation} It is clear that \begin{equation} \label{deh} f(x)=(\alpha_1 x +\beta_1)^3 + (\alpha_2 x +\beta_2)^3 \Leftrightarrow F(x,y)=(\alpha_1 x +\beta_1y)^3 + (\alpha_2 x +\beta_2y)^3. \end{equation} 

We start with computing the center of $F(x,y).$ Suppose $X=(c_{ij}) \in \C^{2 \times 2}$ and $X \in Z(F).$ Then the $c_{ij}$ satisfy the following linear equations
\begin{equation} \label{lec}
\begin{cases}
a_0c_{12}+a_1(c_{22}-c_{11})-a_2c_{21}=0 \\
a_1c_{12}+a_2(c_{22}-c_{11})-a_3c_{21}=0
\end{cases}
\end{equation}

First we derive the following well-known facts by the theory of centers.

\begin{proposition} \label{onecube}
Let $F(x,y)$ be as \eqref{bc}.
\begin{itemize}
\item[(1)] $F(x,y)$ is a perfect cube if and only if $\frac{a_0}{a_1} = \frac{a_1}{a_2} = \frac{a_2}{a_3}.$
\item[(2)] $F(x,y)=(\alpha x +\beta y)^3+\gamma y^3$ if and only $\frac{a_0}{a_1} = \frac{a_1}{a_2} .$
\item[(3)] $F(x,y)=\gamma x^3+(\alpha x +\beta y)^3$ if and only $\frac{a_1}{a_2} = \frac{a_2}{a_3} .$
\end{itemize}
\end{proposition}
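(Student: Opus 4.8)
The plan is to read the three cases directly off the linear system \eqref{lec} that cuts out $Z(F)$, supplemented by the diagonalization algorithm from the proof of Proposition~\ref{cd}. Writing the Hessian as $H=6(xH_x+yH_y)$ with $H_x=\begin{pmatrix} a_0 & a_1\\ a_1 & a_2\end{pmatrix}$ and $H_y=\begin{pmatrix} a_1 & a_2\\ a_2 & a_3\end{pmatrix}$, the two conditions in \eqref{lec} say precisely that $H_xX$ and $H_yX$ are symmetric. Since only the combination $c_{22}-c_{11}$ enters, the scalar matrices always lie in $Z(F)$, and the non-scalar part is governed by the $2\times 3$ matrix $\begin{pmatrix} a_0 & a_1 & -a_2\\ a_1 & a_2 & -a_3\end{pmatrix}$ acting on $(c_{12},\,c_{22}-c_{11},\,c_{21})^T$. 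The observation driving the proof is that the three $2\times 2$ minors of this matrix are, up to sign, $\det H_x=a_0a_2-a_1^2$, $\det H_y=a_1a_3-a_2^2$ and $a_0a_3-a_1a_2$, and that these are exactly the coefficients of the quadratic Hessian covariant $\det H/36$ of $F$.

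The three ``only if'' directions are immediate by expansion. If $F=(\alpha x+\beta y)^3+\gamma y^3$, then comparison of coefficients gives $a_0=\alpha^3$, $a_1=\alpha^2\beta$, $a_2=\alpha\beta^2$, so $a_0a_2=a_1^2$, i.e. $a_0/a_1=a_1/a_2$; case (3) is the mirror statement, and in case (1) the full binomial expansion gives $a_i=\alpha^{3-i}\beta^i$, whence all three ratios coincide.

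For the ``if'' direction of (2) I argue through the center. The hypothesis is $\det H_x=0$, which is exactly the solvability of the subsystem of \eqref{lec} with $c_{21}=0$, namely $H_x(c_{12},\,c_{22}-c_{11})^T=0$, by a nonzero vector; hence $Z(F)$ contains a non-scalar upper-triangular matrix, and (unless $a_0=a_1=0$, a case deferred below) one with $c_{11}\ne c_{22}$. As $Z(F)$ is a commutative algebra containing the identity, such a matrix has two distinct eigenvalues and can be normalized to an upper-triangular idempotent $E\in Z(F)$. Diagonalizing $E$ to $E_{22}$ by an upper-triangular $P$ and setting $g(y)=f(Py)$, the relation $E_{22}\in Z(g)$ forces the Hessian of $g$ to be diagonal, so $g=\lambda_1y_1^3+\lambda_2y_2^3$; since $P$ is upper-triangular the variable $y_2$ is a scalar multiple of $y$, and we recover $F=(\alpha x+\beta y)^3+\gamma y^3$. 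Part (3) is the mirror image, with $c_{12}=0$, $\det H_y=0$, a lower-triangular idempotent normalized to $E_{11}$, and a change of variables fixing $x$. For (1) the hypothesis forces all three minors above to vanish, so the vectors $(a_0,a_1,a_2)$ and $(a_1,a_2,a_3)$ are proportional, say with ratio $\lambda$; then $a_i=a_0\lambda^i$ and $F=a_0(x+\lambda y)^3$ is a perfect cube, matching the classical fact that a binary cubic is a perfect cube exactly when its Hessian covariant vanishes identically.

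The main obstacle is not any single computation but the careful reading of the ratio equalities and the degenerate cases they conceal. Read as genuine equalities of ratios the conditions presuppose that the relevant denominators are nonzero, whereas their determinant forms (such as $a_0a_2=a_1^2$) can hold vacuously: for $F=x^2y$ one has $a_1a_3-a_2^2=0$ yet $F$ admits no decomposition $\gamma x^3+(\alpha x+\beta y)^3$. Accordingly I will phrase the equivalences through the vanishing of $\det H_x$, $\det H_y$ and $a_0a_3-a_1a_2$, and check separately the boundary cases where a kernel vector of $H_x$ (resp. $H_y$) has vanishing second (resp. first) entry, i.e. $a_0=a_1=0$ in (2) and $a_2=a_3=0$ in (3); there the triangular element of $Z(F)$ is a non-diagonalizable Jordan block rather than an idempotent, and the decomposition, or its failure, must be settled by hand.
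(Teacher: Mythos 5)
Your proposal is correct, but it proves the ``if'' directions by a genuinely different route than the paper. For (1) the paper argues through the center: the ratio condition makes the coefficient matrix of \eqref{lec} have rank $1$, so $\dim Z(F)=3$, which exceeds the maximal dimension $2$ of a commutative subalgebra of $\C^{2\times 2}$, whence $F$ must be degenerate by Proposition \ref{cd} and hence a perfect cube; your direct proportionality computation ($a_i=a_0\lambda^i$, so $F=a_0(x+\lambda y)^3$) is more elementary and bypasses the center entirely. For (2) and (3) the paper uses a short completion trick with no diagonalization at all: choosing $a$ with $\frac{a_0}{a_1}=\frac{a_1}{a_2}=\frac{a_2}{a}$, it writes $F=(a_0x^3+3a_1x^2y+3a_2xy^2+ay^3)+(a_3-a)y^3$ and applies (1) to the first summand. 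Your route instead extracts an upper-triangular solution of \eqref{lec} with $c_{21}=0$, normalizes it to an idempotent in $Z(F)$, and runs the diagonalization algorithm from the proof of Proposition \ref{cd} with a triangular conjugating matrix so that $y$ survives up to scalar; this is heavier, but it makes the structural reason for the decomposition visible (a nontrivial idempotent preserving the line through $y$) and it is the argument that scales directly to the degree-$d$ analogue in Section 4, where the paper simply says the cubic proofs ``apply verbatim.'' Your caveat about ratio versus determinant readings is also sharper than anything in the paper: under the literal ratio hypotheses one has $a_1,a_2\neq 0$, and then in (2) also $a_0\neq 0$ (since $a_1^2=a_0a_2$), so your deferred Jordan-block boundary cases ($a_0=a_1=0$, resp.\ $a_2=a_3=0$) are vacuous and leaving them ``to be settled by hand'' costs nothing for the proposition as stated; your example $F=x^2y$ correctly shows the determinant reformulations of (2)/(3) would be false, a degeneracy the paper's own proof silently avoids because its choice of $a$ likewise needs the common ratio to be nonzero.
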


\begin{proof}
(1) Suppose $F(x,y)$ is a perfect cube. That is, $F(x,y)$ is degenerate. Then the center $Z(F)$ is not commutative and in this case $\dim Z(F)=3.$ In other words, by \eqref{lec} the matrix \[ \begin{pmatrix} a_0 & a_1 & a_2 \\ a_1 & a_2 & a_3 \end{pmatrix} \] is of rank $1,$ and thus $\frac{a_0}{a_1} = \frac{a_1}{a_2} = \frac{a_2}{a_3}.$ On the contrary, suppose $\frac{a_0}{a_1} = \frac{a_1}{a_2} = \frac{a_2}{a_3}.$ Then the previous matrix is of rank $1,$ so $Z(F)$ is $3$-dimensional. As is clear that commutative subalgebras of $\C^{2 \times 2}$ are at most $2$-dimensional, that implies $F(x,y)$ is degenerate in this case. Therefore, $F(x,y)=(\alpha x + \beta y)^3$ for some $\alpha, \beta \in \C.$ 

(2) Suppose $F(x,y)=(\alpha x +\beta y)^3+\gamma y^3.$ Since $a_0, a_1, a_2$ are determined by the perfect cube $(\alpha x +\beta y)^3,$ by (1) we have $\frac{a_0}{a_1} = \frac{a_1}{a_2} .$ Cnversely, suppose $\frac{a_0}{a_1} = \frac{a_1}{a_2} .$ We take an $a \in \C$ such that $\frac{a_0}{a_1} = \frac{a_1}{a_2} = \frac{a_2}{a}.$ Then by (1) we have \begin{eqnarray*} F(x,y)&=&a_0x^3+3a_1x^2y+3a_2xy^2+a_3y^3 \\ &=& a_0x^3+3a_1x^2y+3a_2xy^2+ay^3+a_3y^3-ay^3 \\ &=& (\alpha x +\beta y)^3+\gamma y^3. \end{eqnarray*}

(3) Similar to (2).
\end{proof}

It follows from the previous proposition that only very special cubic equations can be completed as one perfect cube plus a constant, in contrast to the situation of quadratic equations. Instead, we show in the following that a general cubic equation can be completed as the sum of two perfect cubes of linear binomials as \eqref{deh}. 

\begin{theorem}
Let $F(x,y)=a_0x^3+3a_1x^2y+3a_2xy^2+a_3y^3.$ Then $$F(x,y)=(\alpha_1 x +\beta_1 y)^3 + (\alpha_2 x +\beta_2 y)^3$$ with $\begin{vmatrix} \alpha_1 & \beta_1 \\ \alpha_2 & \beta_2 \end{vmatrix} \ne 0$ if and only if $$\begin{vmatrix} a_0 & a_2 \\  a_1 & a_3 \end{vmatrix}^2 -4\begin{vmatrix} a_0 & a_1 \\  a_1 & a_2 \end{vmatrix}\begin{vmatrix} a_1 & a_2 \\  a_2 & a_3 \end{vmatrix}   \ne 0.$$ Consequently, a general complex binary cubic is the sum of cubes of two different linear forms. \end{theorem}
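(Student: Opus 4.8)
The plan is to reduce the theorem to a single discriminant computation inside the center algebra. By \eqref{deh}, the asserted identity is equivalent to writing $F$ as a sum of cubes of two linearly independent linear forms, which is the same as saying that $F$ is nondegenerate and diagonalizable over $\C$: nondegeneracy forces the two diagonal coefficients in the diagonalization to be nonzero, and since $\C$ contains all cube roots one may absorb them into the linear forms. By Proposition \ref{cd} this holds if and only if $Z(F)\cong\C\times\C$ as algebras. Thus the entire statement amounts to reading off from the $a_i$ exactly when $Z(F)\cong\C\times\C$.

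First I would compute $Z(F)$ from \eqref{lec}. Scalar matrices are always central, and the two equations constrain only $c_{12}$, $c_{21}$ and $c_{22}-c_{11}$. When $F$ is nondegenerate the coefficient matrix of \eqref{lec} has rank $2$, so $\dim Z(F)=2$ and, by Proposition \ref{cd}, $Z(F)$ is a commutative unital algebra spanned by $I$ and a single non-scalar matrix $N$. Solving \eqref{lec} — the solution vector $(c_{12},\,c_{22}-c_{11},\,c_{21})$ is the cross product of the two coefficient rows — yields, up to a scalar,
\[ c_{12}=-\begin{vmatrix} a_1 & a_2 \\ a_2 & a_3 \end{vmatrix},\qquad c_{22}-c_{11}=\begin{vmatrix} a_0 & a_2 \\ a_1 & a_3 \end{vmatrix},\qquad c_{21}=\begin{vmatrix} a_0 & a_1 \\ a_1 & a_2 \end{vmatrix}. \]

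Next I would use the classification of $2$-dimensional commutative unital $\C$-algebras: such an algebra is isomorphic to $\C\times\C$ precisely when it has no nonzero nilpotent, equivalently when its non-scalar generator $N$ has two distinct eigenvalues. Because shifting $N$ by a multiple of $I$ does not affect distinctness of eigenvalues, this is decided by the discriminant $(c_{11}-c_{22})^2+4c_{12}c_{21}$ of the characteristic polynomial of $N$. Substituting the entries above turns this quantity into exactly
\[ \begin{vmatrix} a_0 & a_2 \\ a_1 & a_3 \end{vmatrix}^2-4\begin{vmatrix} a_0 & a_1 \\ a_1 & a_2 \end{vmatrix}\begin{vmatrix} a_1 & a_2 \\ a_2 & a_3 \end{vmatrix}, \]
so its nonvanishing is the criterion for $Z(F)\cong\C\times\C$.

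It remains to reconcile the boundary behaviour, which is where I expect the only real care to be needed. The three determinants appearing above are exactly the three $2\times2$ minors of $\left(\begin{smallmatrix} a_0 & a_1 & a_2\\ a_1 & a_2 & a_3\end{smallmatrix}\right)$; if the displayed expression is nonzero then these minors cannot all vanish, so $F$ is automatically nondegenerate and the argument above applies. If instead the expression vanishes, there are two cases: when all three minors vanish, $F$ is a perfect cube by Proposition \ref{onecube}(1) and hence not a sum of two independent cubes; when they do not all vanish, $F$ is nondegenerate but $N$ has a repeated eigenvalue, so $Z(F)\cong\C[t]/(t^2)\not\cong\C\times\C$ and $F$ is not diagonalizable. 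Either way the completion fails exactly when the expression vanishes, which proves the equivalence. The final ``consequently'' is then immediate, since the expression is a nonzero polynomial in the $a_i$ (it equals $1$ at $a_0=a_3=1$, $a_1=a_2=0$) and is therefore nonvanishing on a Zariski-dense open set of cubics; one may also note as a sanity check that this expression is, up to a constant, the classical discriminant of the cubic, whose vanishing detects a repeated root.
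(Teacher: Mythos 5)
Your proposal is correct and takes essentially the same route as the paper: both solve \eqref{lec} to see that $Z(F)$ is spanned by $I$ and a single non-scalar matrix (the paper's $\Lambda$), reduce $Z(F)\cong\C\times\C$ to the nonvanishing of the discriminant of its characteristic polynomial, and obtain the ``general'' claim from Zariski density of a nonempty principal open set. Your explicit treatment of the boundary cases (all three minors vanishing versus a repeated eigenvalue) is if anything slightly more careful than the paper's, which simply asserts that the coefficient matrix of \eqref{lec} has rank $2$ under the stated hypothesis.
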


\begin{proof}
According to Proposition \ref{cd}, $F(x,y)=(\alpha_1 x +\beta_1 y)^3 + (\alpha_2 x +\beta_2 y)^3$ with $\begin{vmatrix} \alpha_1 & \beta_1 \\ \alpha_2 & \beta_2 \end{vmatrix} \ne 0$ if and only if $Z(F) \cong \C \times \C.$ On the other hand, under the conditions of the theorem the coefficient matrix of the system of linear equations \eqref{lec} has rank $2$, so by direct compution a general element of $Z(F)$ reads
\[ \begin{pmatrix} c_{11} & c_{12} \\ c_{21} & c_{22} \end{pmatrix} = a \begin{pmatrix} 1 & 0 \\ 0 & 1\end{pmatrix} + b \begin{pmatrix} 0 & -\begin{vmatrix} a_1 & a_2 \\  a_2 & a_3 \end{vmatrix} \\ \begin{vmatrix} a_0 & a_1 \\  a_1 & a_2 \end{vmatrix} & \begin{vmatrix} a_0 & a_2 \\  a_1 & a_3 \end{vmatrix} \end{pmatrix}, \quad \forall a, b \in \C. \] 
Let $\Lambda$ denote the last matrix and $T(\lambda)$ denote its characteristic polynomial \[ \lambda^2 - \begin{vmatrix} a_0 & a_2 \\  a_1 & a_3 \end{vmatrix} \lambda +  \begin{vmatrix} a_0 & a_1 \\  a_1 & a_2 \end{vmatrix} \begin{vmatrix} a_1 & a_2 \\  a_2 & a_3 \end{vmatrix}. \] It is clear that $Z(F) \cong \C[\lambda]/(T(\lambda)).$ Therefore, by the Chinese Remainder Theorem, $Z(F) \cong \C \times \C$ if and only if $T(\lambda)$ has no multiple roots, if and only if $\begin{vmatrix} a_0 & a_2 \\  a_1 & a_3 \end{vmatrix}^2 -4 \begin{vmatrix} a_0 & a_1 \\  a_1 & a_2 \end{vmatrix}\begin{vmatrix} a_1 & a_2 \\  a_2 & a_3 \end{vmatrix} \ne 0.$ 

The preceding assertion implies that the set of binary cubics that are sums of cubes of two different linear forms is a principle open set in the affine space of all binary cubics. As is obvious that this set is not empty, hence it is dense by the theory of elementary algebraic geometry \cite[Chapter 4]{clo}. In other words, a general binary cubic is the sum of cubes of two different linear forms.
\end{proof}

Now we complete the cube for a general binary cubic. Keep the notation 
\begin{equation} \label{mL}
\Lambda =\begin{pmatrix} 0 & -\begin{vmatrix} a_1 & a_2 \\  a_2 & a_3 \end{vmatrix} \\ \begin{vmatrix} a_0 & a_1 \\  a_1 & a_2 \end{vmatrix} & \begin{vmatrix} a_0 & a_2 \\  a_1 & a_3 \end{vmatrix} \end{pmatrix}.
\end{equation}
From now on, for brevity we denote \[ D_1=\begin{vmatrix} a_0 & a_1 \\  a_1 & a_2 \end{vmatrix}, \quad D_2= \begin{vmatrix} a_0 & a_2 \\  a_1 & a_3 \end{vmatrix},\quad D_3=\begin{vmatrix} a_1 & a_2 \\  a_2 & a_3 \end{vmatrix}. \] The two eigenvalues of $\Lambda$ are \[ \lambda_{1,2}=\frac{D_2 \pm \sqrt{D_2^2 -4D_1D_3}}{2}. \] Note that the cases of $D_1=0$ and $D_3=0$ were already treated in Proposition \ref{onecube}, thus in the following we assume $D_1 \ne 0.$ By computing the eigenvectors of $\Lambda,$ we have 
\begin{equation}
\begin{pmatrix} 0 & -D_3 \\ D_1 & D_2 \end{pmatrix} 
\begin{pmatrix} -\lambda_2 & -\lambda_1 \\   D_1 & D_1 \end{pmatrix}
=\begin{pmatrix} -\lambda_2 & -\lambda_1 \\  D_1 & D_1 \end{pmatrix}
\begin{pmatrix} \lambda_1 & 0 \\ 0 & \lambda_2 \end{pmatrix}
\end{equation}
Now as in the proof of Proposition \ref{cd} take the change of variables 
\begin{equation} \label{cv}
 \begin{pmatrix} x \\ y \end{pmatrix}=\begin{pmatrix} -\lambda_2 & -\lambda_1 \\  D_1& D_1 \end{pmatrix} \begin{pmatrix} u \\ v \end{pmatrix}, 
 \end{equation}
then we have \[ F(x, y)=a u^3 + b v^3 \] for some $a, b \in \C.$ One can determine $a$ and $b$ by comparing the coefficients of the both sides of the preceding equation. As a summary of the previous discussion, we have

\begin{proposition}\label{cc}
Let $F(x,y)=a_0x^3+3a_1x^2y+3a_2xy^2+a_3y^3$ and suppose $D_1 \ne 0$ and $$D_2^2 -4D_1D_3 \ne 0.$$ Let $\lambda_1$ and $\lambda_2$ be the eigenvalues of the matrix $\Lambda$ defined by \eqref{mL}. Then
\begin{equation}\label{completing}
F(x, y)=\frac{\lambda_2a_0-D_1a_1}{\lambda_2 - \lambda_1}(x + \frac{\lambda_1}{D_1} y)^3 + \frac{\lambda_1 a_0 - D_1a_1 }{\lambda_1 - \lambda_2}(x+ \frac{\lambda_2}{D_1} y)^3.
\end{equation}
\end{proposition}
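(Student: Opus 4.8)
The plan is to follow the constructive diagonalization procedure in the proof of Proposition \ref{cd}, exploiting the fact that the commutative algebra $Z(F)$ is generated by the identity together with the single matrix $\Lambda$ from \eqref{mL}. First I would record that the two hypotheses are exactly what make the method applicable: the assumption $D_1 \ne 0$ forces the coefficient matrix of \eqref{lec} to have rank $2$, so that $Z(F)$ is the two-dimensional algebra $\C[\Lambda]$, while $D_2^2-4D_1D_3 \ne 0$ guarantees that $\Lambda$ has two distinct eigenvalues $\lambda_1,\lambda_2$. Together, via the preceding theorem, these give $Z(F)\cong\C\times\C$, so $F$ is diagonalizable and the algorithm of Proposition \ref{cd} produces a sum of two cubes.

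Next I would use the displayed eigenvector identity, which says precisely that $P=\begin{pmatrix} -\lambda_2 & -\lambda_1 \\ D_1 & D_1 \end{pmatrix}$ diagonalizes $\Lambda$, i.e. $P^{-1}\Lambda P=\operatorname{diag}(\lambda_1,\lambda_2)$. Since $\Lambda$ generates $Z(F)$, conjugation by $P$ simultaneously diagonalizes the whole center, so $E_{11},E_{22}\in Z(g)$ where $g(u,v)=F(x,y)$ under the change of variables \eqref{cv}. Exactly as in the proof of Proposition \ref{cd}, this forces the Hessian of $g$ to be diagonal, whence $g(u,v)=au^3+bv^3$ for suitable $a,b\in\C$.

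The two coefficients are then read off by specialization: setting $v=0$ in \eqref{cv} and using homogeneity gives $a=g(1,0)=F(-\lambda_2,D_1)$, and setting $u=0$ gives $b=g(0,1)=F(-\lambda_1,D_1)$. Inverting \eqref{cv}, where $\det P=D_1(\lambda_1-\lambda_2)\ne 0$, yields $u=\tfrac{1}{\lambda_1-\lambda_2}(x+\tfrac{\lambda_1}{D_1}y)$ and $v=\tfrac{1}{\lambda_2-\lambda_1}(x+\tfrac{\lambda_2}{D_1}y)$, so substituting $u^3,v^3$ back into $g$ already produces the two cubes of linear forms appearing in \eqref{completing}; what remains is only to match the scalar coefficients.

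The main obstacle is this last simplification: a priori $a=F(-\lambda_2,D_1)$ is a cubic expression in $\lambda_2$, whereas \eqref{completing} demands the clean linear factor $\lambda_2a_0-D_1a_1$. To bridge the gap I would reduce $F(-\lambda_2,D_1)$ modulo the characteristic equation $\lambda_2^2=D_2\lambda_2-D_1D_3$, and then invoke two covariant identities among the coefficients, namely $a_1D_2-a_2D_1=a_0D_3$ and $a_1D_2^2-a_0D_2D_3-a_1D_1D_3-a_3D_1^2=0$, each a direct polynomial check from the definitions of $D_1,D_2,D_3$. Combined with the pleasant evaluation $(\lambda_1-\lambda_2)^2=D_2^2-4D_1D_3$, these reductions collapse $F(-\lambda_2,D_1)$ to $-(\lambda_2a_0-D_1a_1)(\lambda_1-\lambda_2)^2$ and $F(-\lambda_1,D_1)$ to $-(\lambda_1a_0-D_1a_1)(\lambda_1-\lambda_2)^2$. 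Dividing by the factors $(\lambda_1-\lambda_2)^3$ and $(\lambda_2-\lambda_1)^3$ arising from $u^3$ and $v^3$ then gives exactly the coefficients $\tfrac{\lambda_2a_0-D_1a_1}{\lambda_2-\lambda_1}$ and $\tfrac{\lambda_1a_0-D_1a_1}{\lambda_1-\lambda_2}$ of \eqref{completing}, completing the proof.
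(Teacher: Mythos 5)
Your proof is correct, and its skeleton---identifying $Z(F)=\C I+\C\Lambda$ from $D_1\ne 0$, diagonalizing $\Lambda$ by the eigenvector matrix $P$, passing to $g(u,v)=au^3+bv^3$, and inverting \eqref{cv}---is exactly the paper's route up to the final step. Where you genuinely diverge is in pinning down the two scalars. The paper, having reached the ansatz $F(x,y)=\alpha(x+\frac{\lambda_1}{D_1}y)^3+\beta(x+\frac{\lambda_2}{D_1}y)^3$, simply compares the coefficients of $x^3$ and $x^2y$, which gives the linear system $\alpha+\beta=a_0$, $\alpha\lambda_1+\beta\lambda_2=D_1a_1$; since $\lambda_1\ne\lambda_2$, solving it yields the two coefficients of \eqref{completing} in one line, with the matching of the remaining two coefficients of $F$ already guaranteed by the diagonalization. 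You instead evaluate $a=g(1,0)=F(-\lambda_2,D_1)$ and $b=g(0,1)=F(-\lambda_1,D_1)$ and collapse these cubic expressions in $\lambda_i$ using the characteristic relation $\lambda_i^2=D_2\lambda_i-D_1D_3$ together with your two identities $a_1D_2-a_2D_1=a_0D_3$ and $a_1D_2^2-a_0D_2D_3-a_1D_1D_3-a_3D_1^2=0$. Both identities do hold: the first is a one-line expansion of the Hankel minors, and the second reduces, after substituting the first, to $a_2D_2=a_1D_3+a_3D_1$, which is again a direct expansion. Carrying out your reduction indeed gives $F(-\lambda_2,D_1)=-(\lambda_2a_0-D_1a_1)(\lambda_1-\lambda_2)^2$ and its $\lambda_1$-counterpart, and dividing by $(\lambda_1-\lambda_2)^3$ and $(\lambda_2-\lambda_1)^3$ recovers \eqref{completing}, so your argument closes. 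The trade-off: your evaluation method is fully self-contained in that it computes the coefficients directly rather than solving for them against an ansatz, but it costs a cubic reduction modulo the characteristic polynomial plus two determinant identities; the paper's comparison of just the leading two coefficients is a two-unknown linear solve and is the quicker route once diagonalizability is in hand.
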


\begin{proof}
Continued with \eqref{cv}, by direct computation we have 
\[ \begin{pmatrix} u \\ v \end{pmatrix} = \frac{1}{(\lambda_1-\lambda_2)D_1} \begin{pmatrix} D_1 & \lambda_1 \\ -D_1 & -\lambda_2 \end{pmatrix} \begin{pmatrix} x \\ y \end{pmatrix}. \] 
Thus we may assume $F(x,y)=\alpha(x + \frac{\lambda_1}{D_1} y)^3 + \beta (x+ \frac{\lambda_2}{D_1} y)^3$ for some $\alpha$ and $\beta.$ 
Then equation \eqref{completing} follows by comparing the coefficients.
\end{proof}

The following is the corresponding result for general cubic equations.

\begin{theorem} \label{rs}
Let $f(x)=a_0x^3+3a_1x^2+3a_2x+a_3$ and keep the conditions for the $a_i$ and the $\lambda_i$ of Proposition \ref{cc}. Then
\begin{equation}\label{cube}
f(x)=\frac{\lambda_2a_0-D_1a_1}{\lambda_2 - \lambda_1}(x + \frac{\lambda_1}{D_1})^3 + \frac{\lambda_1 a_0 - D_1a_1 }{\lambda_1 - \lambda_2}(x+ \frac{\lambda_2}{D_1})^3.
\end{equation}
and the complex roots of $f(x)=0$ are 
\begin{equation} \label{rf}
x_i=\frac{\gamma \omega^i \lambda_1-\lambda_2}{D_1(1-\gamma \omega^i )}, \quad i=0, 1, 2 
\end{equation} 
where $\gamma=\sqrt[3] {\frac {\lambda_2a_0-D_1a_1} {\lambda_1 a_0 - D_1a_1}}$ and $\omega=-\frac{1}{2} +\frac{\sqrt{3}}{2}i.$
\end{theorem}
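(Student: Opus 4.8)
The plan is to read off both assertions from the homogeneous completion already established in Proposition \ref{cc}, so that essentially no new computation is needed. First I would obtain the identity \eqref{cube} by simply dehomogenizing \eqref{completing}: by the equivalence \eqref{deh}, completing the cube for the binary form $F(x,y)$ is the same as completing the cube for $f(x)$, and setting $y=1$ in \eqref{completing} turns it into \eqref{cube} verbatim. This disposes of the first half of the statement.

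For the roots, I would set $f(x)=0$ in \eqref{cube}. Using $\frac{\lambda_1 a_0 - D_1 a_1}{\lambda_1 - \lambda_2} = -\frac{\lambda_1 a_0 - D_1 a_1}{\lambda_2 - \lambda_1}$ and clearing the common factor $\tfrac{1}{\lambda_2-\lambda_1}$, the equation $f(x)=0$ becomes
$$(\lambda_2 a_0 - D_1 a_1)\Bigl(x + \tfrac{\lambda_1}{D_1}\Bigr)^3 = (\lambda_1 a_0 - D_1 a_1)\Bigl(x + \tfrac{\lambda_2}{D_1}\Bigr)^3.$$
At this point I would record the two nonvanishing facts on which the formula rests. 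Both coefficients $\lambda_i a_0 - D_1 a_1$ are nonzero, for otherwise one of the two cubes in \eqref{cube} disappears and $F$ degenerates to a perfect cube, contradicting $Z(F)\cong\C\times\C$, which is guaranteed by $D_2^2 - 4D_1D_3 \ne 0$. Hence $\gamma$ is well defined and $x=-\tfrac{\lambda_1}{D_1}$ is not a root (substituting it leaves only the surviving first cube), so I may divide by $\bigl(x+\tfrac{\lambda_1}{D_1}\bigr)^3$ to get
$$\Bigl(\frac{x + \lambda_2/D_1}{x + \lambda_1/D_1}\Bigr)^3 = \frac{\lambda_2 a_0 - D_1 a_1}{\lambda_1 a_0 - D_1 a_1} = \gamma^3.$$

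Finally I would take cube roots: the three cube roots of $\gamma^3$ are $\gamma\omega^i$ for $i=0,1,2$, so each root $x$ satisfies $\frac{x+\lambda_2/D_1}{x+\lambda_1/D_1}=\gamma\omega^i$. Since $\lambda_1\ne\lambda_2$ forces $\gamma^3\ne 1$ and hence $\gamma\omega^i\ne 1$, this linear fractional relation inverts uniquely, and solving $x+\tfrac{\lambda_2}{D_1}=\gamma\omega^i\bigl(x+\tfrac{\lambda_1}{D_1}\bigr)$ for $x$ yields precisely \eqref{rf}. The only genuinely delicate points, rather than computational ones, are the two nonvanishing checks (that both cube coefficients are nonzero, so $\gamma$ exists, and that $\gamma\omega^i\ne 1$, so the three roots are finite and distinct); both are immediate consequences of the standing hypotheses $D_1\ne 0$ and $D_2^2-4D_1D_3\ne 0$ through the isomorphism type of $Z(F)$. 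Everything else is the routine inversion of a linear fractional relation.
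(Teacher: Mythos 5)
Your proposal is correct and follows essentially the same route as the paper: dehomogenize \eqref{completing} via \eqref{deh} to get \eqref{cube}, rewrite $f(x)=0$ as an equality of two cubes, extract cube roots, and invert the resulting linear fractional relation to obtain \eqref{rf}. The paper compresses this into ``one readily obtains the claimed roots,'' so your explicit nonvanishing checks (that $\lambda_i a_0 - D_1 a_1 \ne 0$, hence $\gamma$ exists and $x=-\lambda_1/D_1$ is not a root, and that $\gamma\omega^i \ne 1$, which implicitly also uses $a_0\ne 0$) are a welcome filling-in of details rather than a departure.
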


\begin{proof}
By \eqref{deh} and \eqref{completing}, the cubic equation can be completed cubes as \[ f(x)=\frac{\lambda_2a_0-D_1a_1}{\lambda_2 - \lambda_1}(x + \frac{\lambda_1}{D_1})^3 + \frac{\lambda_1 a_0 - D_1a_1 }{\lambda_1 - \lambda_2}(x+ \frac{\lambda_2}{D_1})^3. \] It is clear that \[ f(x)=0 \Leftrightarrow \frac{\lambda_2a_0-D_1a_1}{\lambda_1 - \lambda_2}(x + \frac{\lambda_1}{D_1})^3 = \frac{\lambda_1 a_0 - D_1a_1 }{\lambda_1 - \lambda_2}(x+ \frac{\lambda_2}{D_1})^3. \] Then one readily obtains the claimed roots for the cubic equation.
\end{proof}

\begin{theorem} \label{nil}
Let $f(x)=a_0x^3+3a_1x^2+3a_2x+a_3$ and suppose $D_1 \ne 0$ and $D_2^2 - 4D_1D_3 = 0.$ Then the roots of $f(x)=0$ are 
\[ x_1=x_2=-\frac{D_2}{2D_1 }, \quad x_3= \frac{D_2}{D_1} - \frac{3a_1}{a_0}. \]
\end{theorem}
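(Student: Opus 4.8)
The plan is to locate the repeated root as the place where the two eigenvalue-linear-forms of Proposition \ref{cc} collide, and then to recover the remaining root from the sum of the roots.

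First I would record what the hypothesis $D_2^2-4D_1D_3=0$ means for the matrix $\Lambda$ of \eqref{mL}. Its characteristic polynomial is $T(\lambda)=\lambda^2-D_2\lambda+D_1D_3$, so $T$ acquires a double root exactly when its discriminant $D_2^2-4D_1D_3$ vanishes, and then the unique eigenvalue is $\lambda_0=\tfrac{D_2}{2}$ with $\lambda_0^2=D_1D_3$. This is the degenerate companion of Proposition \ref{cc}: the center $Z(F)\cong\C[\lambda]/(\lambda-\lambda_0)^2$ is now a local algebra carrying nilpotents rather than $\C\times\C$, so $F$ is no longer a sum of two \emph{distinct} cubes. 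In the diagonalizable case the two forms $x+\tfrac{\lambda_i}{D_1}y$ produced the roots $-\lambda_i/D_1$ of $f$; here they merge into the single form $D_1x+\lambda_0 y$, which I expect to be a repeated factor of $F$.

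Second, I would make this precise by showing that the projective point $[x:y]=[-\lambda_0:D_1]$ is a singular point of the cubic curve $F=0$, i.e. that both first partials vanish there. Since $F_x=3(a_0x^2+2a_1xy+a_2y^2)$ and $F_y=3(a_1x^2+2a_2xy+a_3y^2)$, evaluating at $(x,y)=(-\lambda_0,D_1)$, clearing the factor $4$ through $\lambda_0=\tfrac{D_2}{2}$, and replacing $D_2^2$ by $4D_1D_3$ reduces the two values to
$$ 4D_1\,(a_0D_3-a_1D_2+a_2D_1)\qquad\text{and}\qquad 4D_1\,(a_1D_3-a_2D_2+a_3D_1). $$
Each bracketed expression is a $3\times3$ determinant with a repeated row, namely $\det\bigl(\begin{smallmatrix} a_0 & a_1 & a_2\\ a_0 & a_1 & a_2\\ a_1 & a_2 & a_3\end{smallmatrix}\bigr)$ and $\det\bigl(\begin{smallmatrix} a_1 & a_2 & a_3\\ a_0 & a_1 & a_2\\ a_1 & a_2 & a_3\end{smallmatrix}\bigr)$, expanded along the first row; equivalently, $(D_3,-D_2,D_1)$ is the vector of signed $2\times2$ minors of $\bigl(\begin{smallmatrix} a_0 & a_1 & a_2\\ a_1 & a_2 & a_3\end{smallmatrix}\bigr)$ and hence annihilates both rows. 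So both partials vanish, and by Euler's identity $3F=xF_x+yF_y$ the value $F$ vanishes at $[-\lambda_0:D_1]$ as well. Thus $[-\lambda_0:D_1]$ is a double point of $F=0$, i.e. $(D_1x+\lambda_0 y)^2\mid F$; dehomogenizing at $y=1$ gives $x_1=x_2=-\lambda_0/D_1=-\tfrac{D_2}{2D_1}$. Since $D_1\ne0$, Proposition \ref{onecube}(1) shows $F$ is not a perfect cube, so this root is exactly double and a distinct third root exists.

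Finally, I would pin down $x_3$ by comparing the coefficient of $x^2$: writing $f(x)=a_0(x-x_1)(x-x_2)(x-x_3)$ with $x_1=x_2$, the sum of the roots equals $-\tfrac{3a_1}{a_0}$, so $x_3=-\tfrac{3a_1}{a_0}-2x_1=\tfrac{D_2}{D_1}-\tfrac{3a_1}{a_0}$, as claimed. The only genuine computation is the partial-derivative check in the second step, and the main obstacle is simply to organize it so that the hypothesis $D_2^2=4D_1D_3$ together with the two identities $a_0D_3-a_1D_2+a_2D_1=0=a_1D_3-a_2D_2+a_3D_1$ does all the work, keeping the algebra transparent instead of expanding everything into the $a_i$.
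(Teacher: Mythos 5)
Your proof is correct, but it reaches the key factorization by a genuinely different route than the paper. The paper stays inside its center-theoretic framework: from $\lambda_1=\lambda_2$ it gets $Z(F)\cong\C[\epsilon]/(\epsilon^2)$, conjugates the nilpotent element $\Lambda-\lambda I_2$ to the Jordan block $\bigl(\begin{smallmatrix}0&1\\0&0\end{smallmatrix}\bigr)$, changes variables by the conjugating matrix $P$, and reads off from $\bigl(\begin{smallmatrix}0&1\\0&0\end{smallmatrix}\bigr)\in Z(G)$ that $\frac{\partial^2 G}{\partial u^2}=0$, so $G=(b_2u+b_3v)v^2$ and $(x+\frac{\lambda}{D_1})^2$ divides $f$. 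You instead verify directly that $[-\lambda_0:D_1]$ is a singular point of $F=0$, using the Laplace-expansion identities $a_0D_3-a_1D_2+a_2D_1=0=a_1D_3-a_2D_2+a_3D_1$ (repeated-row determinants) together with $\lambda_0^2=D_1D_3$, and Euler's identity to get $F=0$ there as well; your multiplicity bookkeeping (vanishing partials with $D_1\ne 0$ forces $\ell^2\mid F$; Proposition \ref{onecube}(1) rules out a triple root since a perfect cube has $D_1=a_0a_2-a_1^2=0$) is sound, and both proofs finish identically with Vieta. Your argument is more elementary and self-contained — no center algebra is actually needed, only classical facts about binary forms — and the determinant identities make the computation transparent. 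What the paper's route buys is that it runs on the structural engine the note is advertising, and it scales verbatim to the degree-$d$ analogue in Section 4, where a root of multiplicity $d-1$ is needed: your singular-point computation only certifies multiplicity at least $2$, so in higher degree you would have to check vanishing of all derivatives up to order $d-2$, whereas the nilpotent-center argument kills all monomials $u^iv^{d-i}$ with $i\ge 2$ at once. One small slip worth noting, though it is purely motivational and not used later: in the diagonalizable case the linear forms $x+\frac{\lambda_i}{D_1}y$ are \emph{not} zero sets of roots of $f$ (indeed $f(-\lambda_1/D_1)=\beta(\lambda_2-\lambda_1)^3/D_1^3\ne 0$ in general); what is true is that the roots \eqref{rf} all tend to $-\lambda/D_1$ as $\lambda_1\to\lambda_2$, which is the honest version of your collision heuristic.
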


\begin{proof}
Let $F(x,y)$ be the homogenization of $f(x).$ Under the present condition, the matrix $\Lambda$ has two equal eigenvalues $\lambda=\lambda_{1,2}=\frac{D_2}{2}.$ It follows that
$Z(F) \cong \C[\epsilon]/(\epsilon^2).$ It is clear that $\Lambda-\lambda I_2 \in Z(F)$ and is square zero. By direct computation, we have \[ \begin{pmatrix} -\lambda & -D_3 \\ D_1 & \lambda \end{pmatrix} \begin{pmatrix} -\lambda & 1-\lambda \\ D_1 & D_1 \end{pmatrix} = \begin{pmatrix} -\lambda & 1-\lambda \\ D_1 & D_1 \end{pmatrix} \begin{pmatrix} 0 & 1 \\ 0 & 0 \end{pmatrix}. \] Let $P$ denote the invertible matrix $\begin{pmatrix} -\lambda & 1-\lambda \\ D_1 & D_1 \end{pmatrix}.$ Take a change of variables $\begin{pmatrix} x \\ y \end{pmatrix}=P \begin{pmatrix} u \\ v \end{pmatrix}$ and denote the resulting binary cubic by $G(u,v).$ Note that $Z(G)=P^{-1} Z(F) P.$ Therefore $\begin{pmatrix} 0 &1 \\ 0 & 0 \end{pmatrix} \in Z(G).$ Then by the condition of center algebras, we can easily observe that  $\frac{\partial^2 G}{\partial u^2}=0.$ It follows that the monomials $u^3$ and $u^2v$ do not appear in $G(u,v).$ Therefore $$F(x,y)=G(u,v)=b_2uv^2+b_3v^3=(b_2u + b_3v)v^2.$$ Note that $v=x+\frac{\lambda}{D_1}y.$ Then by the dehomogenization of $F$ we observe that $(x+\frac{\lambda}{D_1})^2$ is a factor of $f(x).$ In other words, $x=-\frac{D_2}{2D_1 }$ is a double root of $f(x)=0.$ Finally, the third root follows by the Vieta's formula.
\end{proof}

\begin{remark} \label{cf}
\emph{ We apply Theorems \ref{rs} and \ref{nil} to the cubic equation $x^3+px+q=0.$ We assume $pq \ne 0$ to avoid the trivial situation. In this case, $D_1=\frac{p}{3},$ $D_2=q,$ $D_3=-\frac{p^2}{9},$ and $\lambda_{1,2}=\frac{q}{2} \pm \sqrt{\frac{q^2}{4}+\frac{p^3}{27}}.$ In the case of \[ D_2^2 - 4D_1D_3 \ne 0 \Leftrightarrow \frac{p^3}{27}+\frac{q^2}{4} \ne 0, \] by \eqref{rf} we have
\begin{eqnarray*}
x_0&=&\frac{3}{p} \frac{ \lambda_1^{\frac{2}{3}} \lambda_2^{\frac{1}{3}}- \lambda_2}{1-\lambda_1^{-\frac{1}{3}} \lambda_2^{\frac{1}{3}} }=\frac{3}{p}(\lambda_1 \lambda_2)^{\frac{1}{3}} \frac{\lambda_1^{\frac{2}{3}}-\lambda_2^{\frac{2}{3}}} {\lambda_1^{\frac{1}{3}}-\lambda_2^{\frac{1}{3}}} = (-\lambda_2)^{\frac{1}{3}}+(-\lambda_1)^{\frac{1}{3}} \\ &=& \sqrt[3]{-\frac{q}{2} + \sqrt{\frac{q^2}{4}+\frac{p^3}{27}}} + \sqrt[3]{-\frac{q}{2} - \sqrt{\frac{q^2}{4}+\frac{p^3}{27}}}. \end{eqnarray*}
This is exactly the well-known Cardano formula. In case $D_2^2 - 4D_1D_3 = 0 \Leftrightarrow \frac{p^3}{27}+\frac{q^2}{4} = 0,$ then by Theorem \ref{nil} the cubic equation has multiple roots and the roots are \[ x_1=x_2=-\frac{3q}{2p}, \quad x_3=\frac{3q}{p}. \] Note that the third root $x_3$ coincides with  the one obtained from the Cardano formula. Therefore, the Cardano formula can be derived from our approach of completing the cube. Moreover, the terms appearing in the Cardano formula now have some interesting meaning: they are the eigenvalues of a generating matrix of the center algebra of its associated binary cubic. }
\end{remark}

\section{Solving some algebraic equations by completing powers}
The crux of solving cubic equations by completing the cube is that the center algebra of a general binary cubic is nontrivial, namely it contains matrices other than scalar matrices. This approach is easily extended to equations of higher degrees with nontrivial center. In particular, the key structure information of center algebras enables us to complete powers and therefore helps to find radical solutions to some algebraic equations.
 
For convenience, write a complex algebraic equation of degree $d \ge 3$ as
\begin{equation} \label{ed}
f(x)=a_0x^d+{d \choose 1}a_1x^{d-1}+\cdots+{d \choose d-1}a_{d-1}x+a_d.
\end{equation}
We also consider its homogenization
\begin{equation} \label{fd}
F(x,y)=a_0x^d+{d \choose 1}a_1x^{d-1}y+\cdots+{d \choose d-1}a_{d-1}xy^{d-1}+a_dy^d.
\end{equation}

First of all, we compute the center of $F(x,y).$ Suppose $X=(c_{ij})_{2 \times 2} \in Z(F).$ The the condition of center is equivalent to the following system of linear equations
\begin{equation} \label{dlec}
\begin{cases}
a_0c_{12}+a_1(c_{22}-c_{11})-a_2c_{21}=0 \\
a_1c_{12}+a_2(c_{22}-c_{11})-a_3c_{21}=0 \\
 \ \dots \dots \dots \dots \dots \dots \dots \dots \dots \dots \\
a_{d-2}c_{12}+a_{d-1}(c_{22}-c_{11})-a_dc_{21}=0
\end{cases}
\end{equation}
Since the $a_i$ are arbitrary, the rank of the coefficient matrix of the previous system of linear equations is in general $3.$ That is to say, $Z(F) \cong \C$ for a general binary cubic $F.$ See \cite{hlyz2} for an explicit explanation. However, if $Z(F)$ is nontrivial, then $F$ can be completely determined by its center structure. Accordingly, the associated algebraic equation $f(x)=0$ can be solved. In the following, we switch freely between $f$ and $F$ as necessary. 

\begin{theorem} 
Let $f(x)$ and $F(x,y)$ be as \eqref{ed} and \eqref{fd}. 
\begin{itemize}
\item[(1)] $f(x)=(\alpha x+\beta)^d$ if and only if $\frac{a_0}{a_1}=\frac{a_1}{a_2}=\cdots=\frac{a_{d-1}}{a_d}.$
\item[(2)] $f(x)=(\alpha x+\beta)^d+\gamma$ if and only $\frac{a_0}{a_1}=\frac{a_1}{a_2}=\cdots=\frac{a_{d-2}}{a_{d-1}}.$
\item[(3)] $f(x)=\gamma x^d+(\alpha x + \beta)^d$ if and only $\frac{a_1}{a_2}=\cdots=\frac{a_{d-2}}{a_{d-1}}=\frac{a_{d-1}}{a_d}.$
\item[(4)] $f(x)=(\alpha_1 x + \beta_1)^d+(\alpha_2 x + \beta_2)^d$ with $\alpha_1\beta_2-\alpha_2\beta_1 \ne 0$ if and only if $Z(F) \cong \C \times \C.$
\item[(5)] $f(x)=(\alpha_1 x + \beta_1)(\alpha_2 x + \beta_2)^{d-1}$ with $\alpha_1\beta_2-\alpha_2\beta_1 \ne 0$ if and only if $Z(F) \cong \C[\epsilon]/(\epsilon^2).$
\end{itemize}
\end{theorem}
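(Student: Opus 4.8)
The plan is to read the entire theorem off the single linear system \eqref{dlec}. Its coefficient matrix is, up to the sign of one column, the $(d-1)\times 3$ Hankel matrix
\[ M=\begin{pmatrix} a_0 & a_1 & a_2 \\ a_1 & a_2 & a_3 \\ \vdots & \vdots & \vdots \\ a_{d-2} & a_{d-1} & a_d \end{pmatrix}. \]
Since \eqref{dlec} constrains only $c_{12}$, $c_{22}-c_{11}$ and $c_{21}$, leaving the scalar direction ($c_{11}=c_{22}$, $c_{12}=c_{21}=0$) always free, one has $\dim Z(F)=4-\rk M$, so the whole statement becomes a dictionary between the size and structure of $Z(F)$ and the rank and shape of $M$. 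For (1) I would argue that $F$ is a perfect $d$-th power iff it is degenerate, iff $Z(F)$ is not commutative (Proposition \ref{cd}, together with the fact that commutative subalgebras of $\C^{2\times 2}$ are at most $2$-dimensional), iff $\dim Z(F)\ge 3$, iff $\rk M\le 1$; and $\rk M\le 1$ is the vanishing of all $2\times 2$ minors of $M$, i.e. $(a_0,\dots,a_d)$ is geometric, i.e. $\frac{a_0}{a_1}=\cdots=\frac{a_{d-1}}{a_d}$, in which case $a_i=a_0t^i$ yields $F=a_0(x+ty)^d$.

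Parts (2) and (3) then follow the template of Proposition \ref{onecube}. Given $\frac{a_0}{a_1}=\cdots=\frac{a_{d-2}}{a_{d-1}}$, I would pick $a\in\C$ making $a_0,\dots,a_{d-1},a$ geometric, apply (1) to get $a_0x^d+\binom{d}{1}a_1x^{d-1}y+\cdots+\binom{d}{d-1}a_{d-1}xy^{d-1}+ay^d=(\alpha x+\beta y)^d$, and then write $F=(\alpha x+\beta y)^d+(a_d-a)y^d$, which dehomogenizes to $f=(\alpha x+\beta)^d+\gamma$. Part (3) is identical after the involution $x\leftrightarrow y$, which reverses the coefficient sequence.

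Part (4) is immediate from Proposition \ref{cd}: after homogenizing, the asserted factorization says exactly that $F$ is diagonalizable as a sum of two independent $d$-th powers, and such an $F$ is nondegenerate (it becomes $x^d+y^d$ after a linear change), so Proposition \ref{cd} gives $F$ diagonalizable iff $Z(F)\cong\C^2=\C\times\C$. The condition $\alpha_1\beta_2-\alpha_2\beta_1\ne 0$ matches the independence of the two linear forms, equivalently both diagonal coefficients being nonzero; the only care needed is to absorb these coefficients into the linear forms using complex $d$-th roots.

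Part (5) is the new ingredient and, I expect, the main obstacle. For the forward direction I would check that $F=L_1L_2^{d-1}$ with $L_1,L_2$ independent is nondegenerate with $\dim Z(F)=2$ (normalizing $L_2=y$, $L_1=x$ and reading $\rk M=2$ off $F=xy^{d-1}$), and that it is \emph{not} diagonalizable: a sum of two independent $d$-th powers factors into $d$ distinct linear forms, whereas $L_1L_2^{d-1}$ carries a factor of multiplicity $d-1\ge 2$. Since a $2$-dimensional commutative $\C$-algebra is either $\C\times\C$ or $\C[\epsilon]/(\epsilon^2)$, Proposition \ref{cd} rules out the semisimple case and forces $Z(F)\cong\C[\epsilon]/(\epsilon^2)$. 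For the converse I would mimic the proof of Theorem \ref{nil}: choose a nonzero square-zero $N\in Z(F)$, conjugate it to $\bigl(\begin{smallmatrix}0&1\\0&0\end{smallmatrix}\bigr)$ by some invertible $P$, change variables by $P$ to obtain $G$, and exploit the center condition for $G$ at $X=\bigl(\begin{smallmatrix}0&1\\0&0\end{smallmatrix}\bigr)$ to force $\partial^2G/\partial u^2=0$; this annihilates every monomial divisible by $u^2$, so $G=(c_1u+c_0v)v^{d-1}$ with $c_1\ne 0$ by nondegeneracy, dehomogenizing to the required $f=(\alpha_1x+\beta_1)(\alpha_2x+\beta_2)^{d-1}$ with $\alpha_1\beta_2-\alpha_2\beta_1\ne 0$. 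The delicate points are the exact count $\dim Z(F)=4-\rk M$, the clean dichotomy of $2$-dimensional commutative subalgebras, and the squarefree-versus-repeated-factor argument separating (4) from (5).
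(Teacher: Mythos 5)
Your proposal is correct and takes essentially the same route as the paper, whose entire proof is the remark that the cubic arguments (Proposition \ref{onecube} for items (1)--(3), Proposition \ref{cd} for item (4), and the nilpotent-center argument of Theorem \ref{nil} for item (5)) apply verbatim in degree $d$. The details you supply --- the count $\dim Z(F)=4-\operatorname{rank}M$ from the system \eqref{dlec}, the dichotomy $\C\times\C$ versus $\C[\epsilon]/(\epsilon^2)$ for two-dimensional commutative subalgebras of $\C^{2\times 2}$, and the repeated-factor argument separating (4) from (5) --- are precisely what the paper's ``verbatim'' elides, and they are all sound.
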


\begin{proof}
The proofs for the corresponding results of cubics can be applied verbatim here.
\end{proof}

Finally, we derive a radical formula for an algebraic equation with nontrivial center. Keep the following notations of Section 3: \[ D_1=\begin{vmatrix} a_0 & a_1 \\  a_1 & a_2 \end{vmatrix}, \quad D_2= \begin{vmatrix} a_0 & a_2 \\  a_1 & a_3 \end{vmatrix},\quad D_3=\begin{vmatrix} a_1 & a_2 \\  a_2 & a_3 \end{vmatrix}, \quad \lambda_{1,2}=\frac{D_2 \pm \sqrt{D_2^2 -4D_1D_3}}{2}. \] Let $\Pi$ denote the following Hankel matrix
\[ 
\begin{pmatrix}
a_0 & a_1 & a_2 \\
a_1 & a_2 & a_3 \\
\vdots & \vdots & \vdots \\
a_{k-2} & a_{k-1} & a_k
\end{pmatrix}.
\]
As the case of $\operatorname{rank} \Pi=1$ is easy and treated in item (1) of the previous theorem, in the rest we focus on the case of $\operatorname{rank} \Pi=2.$ 

\begin{theorem}
Suppose $f(x)=a_0x^d+{d \choose 1}a_1x^{d-1}+\cdots+{d \choose d-1}a_{d-1}x+a_d$ and $a_0 \ne 0.$
\begin{itemize}
\item[(1)] Assume $\operatorname{rank} \Pi=2,$ $D_1 \ne 0$ and $D_2^2 \ne 4D_1D_3.$ Then the roots of $f(x)=0$ are \[ x_i=\frac{\delta \zeta^i \lambda_1 - \lambda_2}{D_1(1-\delta \zeta^i)}, \quad i=0, 1, \dots, d-1 \] where $\delta=\sqrt[d]{\frac{\lambda_2 a_0-D_1a_1}{\lambda_1 a_0-D_1a_1}},$ $\zeta=\cos\frac{2\pi}{d}+i\sin\frac{2\pi}{d}.$
\item[(2)] Assume $\operatorname{rank} \Pi=2,$ $D_1 \ne 0$ and $D_2^2 = 4D_1D_3.$ Then the roots of $f(x)=0$ are \[ x_1=\dots=x_{d-1}=-\frac{D_2}{2D_1}, \ x_d=\frac{(d-1)D_2}{2D_1}-\frac{da_1}{a_0}. \]
\end{itemize}
\end{theorem}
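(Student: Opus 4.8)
The theorem splits into two cases according to whether $\Lambda$ is diagonalizable (part (1)) or has a repeated eigenvalue (part (2)), and in both cases the strategy is to reduce the degree-$d$ situation to the cubic computations of Section 3 by exploiting that the center condition \eqref{dlec} only involves $a_0,\dots,a_d$ through its first few rows. The key observation is that the hypothesis $\operatorname{rank}\Pi=2$ forces the entire system \eqref{dlec} to have solution space of dimension exactly $2$, so $Z(F)$ is a two-dimensional commutative subalgebra of $\C^{2\times 2}$ generated by the identity and the single matrix $\Lambda$ defined in \eqref{mL} --- exactly as in the cubic case. I would begin by verifying this: since the coefficient matrix of \eqref{dlec} has the Hankel rows $(a_i, a_{i+1}-\text{(shifted)}, a_{i+2})$ and $\operatorname{rank}\Pi=2$ pins down the ratios, a short linear-algebra argument shows every row is dependent on the first two, so the general element of $Z(F)$ is still $aI+b\Lambda$.

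\textbf{Part (1).} With $Z(F)$ identified, I would apply the characteristic-polynomial argument from the cubic theorem verbatim: $Z(F)\cong \C[\lambda]/(T(\lambda))$ where $T(\lambda)=\lambda^2-D_2\lambda+D_1D_3$, and $D_2^2\ne 4D_1D_3$ means $T$ has distinct roots $\lambda_1,\lambda_2$, so $Z(F)\cong\C\times\C$. By item (4) of the preceding theorem this is equivalent to $f(x)=(\alpha_1 x+\beta_1)^d+(\alpha_2 x+\beta_2)^d$. I would then reuse the eigenvector computation of Proposition \ref{cc}: the same change of variables \eqref{cv} diagonalizes the hessian, yielding $F(x,y)=\alpha(x+\tfrac{\lambda_1}{D_1}y)^d+\beta(x+\tfrac{\lambda_2}{D_1}y)^d$ with $\alpha,\beta$ determined by matching the top coefficients (giving $\alpha=\tfrac{\lambda_2 a_0-D_1 a_1}{\lambda_2-\lambda_1}$ and similarly for $\beta$, exactly as in \eqref{completing}). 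Setting $F=0$, dehomogenizing, and taking $d$-th roots produces $(x+\tfrac{\lambda_1}{D_1})^d=-\tfrac{\beta}{\alpha}(x+\tfrac{\lambda_2}{D_1})^d$; solving the linear equation $x+\tfrac{\lambda_1}{D_1}=\delta\zeta^i(x+\tfrac{\lambda_2}{D_1})$ for each $d$-th root of unity $\zeta^i$ gives the stated $x_i$, with $\delta$ the $d$-th root of the coefficient ratio.

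\textbf{Part (2).} Here $D_2^2=4D_1D_3$ makes $T(\lambda)$ a perfect square, so $\Lambda$ has the single eigenvalue $\lambda=\tfrac{D_2}{2}$ and $Z(F)\cong\C[\epsilon]/(\epsilon^2)$. I would follow the nilpotent argument of Theorem \ref{nil}: conjugating $\Lambda-\lambda I$ to the Jordan block $\left(\begin{smallmatrix}0&1\\0&0\end{smallmatrix}\right)$ by the same matrix $P$ puts $E_{12}\in Z(G)$ for the transformed form $G$, which by the center condition forces $\partial^2 G/\partial u^2=0$, so no $u^d$ or $u^{d-1}v$ term survives and $G(u,v)=v^{d-1}(b_{d-1}u+b_d v)$ is divisible by $v^{d-1}$. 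Dehomogenizing shows $(x+\tfrac{\lambda}{D_1})^{d-1}=(x+\tfrac{D_2}{2D_1})^{d-1}$ divides $f$, giving the $(d-1)$-fold root $x=-\tfrac{D_2}{2D_1}$; the last root then follows from Vieta's formula applied to the sum of roots, $-\tfrac{d a_1}{a_0}$, which yields $x_d=\tfrac{(d-1)D_2}{2D_1}-\tfrac{d a_1}{a_0}$.

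\textbf{Main obstacle.} The genuinely new point, and the step I expect to require the most care, is establishing that $\operatorname{rank}\Pi=2$ (a condition on the full $(d-1)\times 3$ Hankel matrix) is what guarantees the center is the expected two-dimensional algebra generated by $\Lambda$; for the cubic the matrix had only two rows so this was automatic, whereas for general $d$ one must check that the extra Hankel rows impose no further constraints beyond those coming from $D_1,D_2,D_3$, equivalently that the $2$-dimensional solution space of the first two equations already satisfies all the rest. Once this rank reduction is secured, the remaining arguments are the degree-$d$ analogues of the cubic proofs --- the $d$-th root extraction and the eigenvector bookkeeping are routine generalizations --- so the conceptual weight lies entirely in the linear-algebra lemma about $\Pi$.
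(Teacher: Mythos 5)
Your proposal is correct and takes essentially the same approach as the paper: both reduce the full system \eqref{dlec} to its first two rows using $\operatorname{rank}\Pi=2$ and $D_1\ne 0$ (the coefficient matrix of \eqref{dlec} is $\Pi$ up to a sign in the third column, so all rows depend on the first two), identify $Z(F)$ as generated by $\Lambda$, and then rerun the cubic arguments --- Proposition \ref{cc}/Theorem \ref{rs} for the distinct-eigenvalue case and Theorem \ref{nil} plus Vieta for the repeated-eigenvalue case. If anything you give more detail than the paper, which merely cites the earlier proofs as ``similar''; the only micro-slip is that in part (2) the condition $\partial^2 G/\partial u^2=0$ kills every monomial with $u$-exponent at least $2$, not just $u^d$ and $u^{d-1}v$, but that is exactly what yields your (correct) conclusion $G(u,v)=v^{d-1}(b_{d-1}u+b_dv)$.
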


\begin{proof} By the assumptions $\operatorname{rank} \Pi=2$ and $D_1 \ne 0,$ the system of linear equations \eqref{dlec} is determined by the first two rows. Then the center algebra $Z(F)$ is generated by $$\Lambda=\begin{pmatrix} 0 & -D_3 \\ D_1 & D_2 \end{pmatrix}. $$ The eigenvalues of $\Lambda$ are $\lambda_{1,2}=\frac{D_2 \pm \sqrt{D_2^2 -4D_1D_3}}{2}.$
\begin{itemize}
\item[(1)] If $D_2^2 \ne 4D_1D_3,$ then $\lambda_1 \ne \lambda_2$ and so $Z(F) \cong \C \times \C.$ Similar to the proofs of Proposition \ref{cc} and Theorem \ref{rs}, we have 
\[f(x)=\frac{\lambda_2a_0-D_1a_1}{\lambda_2 - \lambda_1}(x + \frac{\lambda_1}{D_1})^d + \frac{\lambda_1 a_0 - D_1a_1 }{\lambda_1 - \lambda_2}(x+ \frac{\lambda_2}{D_1})^d.\] It is clear that \[ f(x)=0 \Leftrightarrow \frac{\lambda_2a_0-D_1a_1}{\lambda_1 - \lambda_2}(x + \frac{\lambda_1}{D_1})^d = \frac{\lambda_1 a_0 - D_1a_1}{\lambda_1 - \lambda_2}(x+ \frac{\lambda_2}{D_1})^d.\] Obviously, $f(x)=0$ has the claimed roots.
\item[(2)] If $D_2^2 = 4D_1D_3,$ then $\lambda_1=\lambda_2=\frac{D_2}{2}$ and thus $Z(F) \cong \C[\epsilon]/(\epsilon^2).$Then similar to the proof of Theorem \ref{nil}, $x=-\frac{\lambda_1}{D_1}=-\frac{D_2}{2D_1}$ is a root of multiplicity $d-1.$ The other root is obtained by the Vieta's formula.
\end{itemize}

\end{proof}

\begin{remark}
\emph{The condition $D_1 \ne 0$ amounts to that of the first two columns of $\Pi$ are linearly independent. One may derive the similar radical formulas when the second column and the third column are linearly independent, or the first column and the third column are linearly independent. We leave the detail to the interested reader.}
\end{remark}

\begin{example} 
\emph{ Consider the quintic equation $31 x^5 + 235 x^4 + 710 x^3 + 1070 x^2 + 805 x + 242=0.$ Then $D_1=-8, \ D_2=-20, \ D_3=-12, \ \lambda_1=-8, \ \lambda_2=-12, \ \delta=\frac{1}{2}.$ So it fits item (1) of the previous theorem and the roots are $x_0=-2$ and $x_n=\frac{3-e^{\frac{2 \pi i}{n}}}{e^{\frac{2 \pi i}{n}} - 2}$ for $1 \le n \le 4.$ }
\end{example}

\begin{example} 
\emph{ Consider the degree $7$ equation $$x^7 - \frac{8}{3} x^6 + \frac{11}{4} x^5 - \frac{5}{4} x^4 + \frac{5}{48} x^3 + \frac{1}{8}x^2 - \frac{3}{64} x + \frac{1}{192}=0.$$ Then $D_1=-\frac{25}{1764}, \ D_2=\frac{25}{1764}, \ D_3=-\frac{25}{7056}, \ \lambda_1=\lambda_2=\frac{25}{3528}.$ So the equation fits item (2) of the previous theorem and the roots are $x_1=\dots=x_6=\frac{1}{2}, \ x_7=-\frac{1}{3}.$ }
\end{example}

\section{Summary}
In this note, we apply nontrivial center algebraic structure to provide radical solutions to some higher degree algebraic equations. In the case of cubic equations, we show that each cubic has a nontrivial center and this enables us to complete the cube, or factorize the equation. For algebraic equations of degree greater than $3,$ we provide very simple and elementary criterion and algorithm to complete powers and obtain radical solutions. The present method only works for very special equations, even for quartic equations. However, if we consider the completion of powers in a broader sense, then we may be able to solve more equations. In the following we take quartic equations as examples to elucidate our idea. 

Let $f(x)=a_0x^4+4a_1x^3+6a_2x^2+4a_3x+a_4$ be a quartic. Then by a suitable change of variable, the quartic can be reduced to $g(y)=y^4+py^2+qy+r.$ In stead of completing $g(y)$ as the sum of two biquadrates, we are content with writing $g(y)$ as a sum of two squares. Of course, this is already enough to solve the quartic equation $g(y)=0.$ We carry out the idea by the method of undetermined coefficients. Suppose \[ y^4+py^2+qy+r=(y^4+2\alpha y^2+\alpha^2)+[(p-2\alpha)y^2+qy+r-\alpha^2] \] and choose $\alpha$ such that the latter term of quadratic is a perfect square. This is equivalent to \[ q^2-4(p-2\alpha)(r-\alpha^2)=0. \] This is a cubic equation in $\alpha$ and clearly such an $\alpha$ is available. Therefore, quartic equations can be solved by a generalized completion of powers. It is of interest whether there is a notion of generalized centers for higher degree forms which governs such a generalization of completing the powers, or even the generalized Waring decompositions of polynomials \cite{fos}. 

Note also that if the binary form $F(x,y)$ has center $Z(F) \cong \C \times \C,$ then the splitting field of its dehomogenization $f(x)$ is usually a Kummer extension \cite{l} over a suitable ground filed. It is also of interest whether all Kummer extensions appear in this way.

%


\begin{thebibliography}{99}

\bibitem{clo}
D. A. Cox, J. Little, D. O'Shea, Ideals, varieties, and algorithms. An introduction to computational algebraic geometry and commutative algebra. Fourth edition. Undergraduate Texts in Mathematics. Springer, Cham, 2015.

\bibitem{fos}
R. Fr\"oberg, G. Ottaviani, B. Shapiro, On the Waring problem for polynomial rings, Proc. Natl. Acad. Sci. USA 109 (2012), no. 15, 5600--5602.

\bibitem{h1}
D. Harrison, A Grothendieck ring of higher degree forms, J. Algebra 35 (1975) 123--138.


\bibitem{hllyz}
H.-L. Huang, L. Liao, H. Lu, Y. Ye, C. Zhang, Harrison center and products of sums of powers, arXiv:2210.03401.

\bibitem{hlyz}
H.-L. Huang, H. Lu, Y. Ye, C. Zhang, Diagonalizable higher degree forms and symmetric tensors, Linear Alg. Appl. 613 (2021) 151--169.

\bibitem{hlyz2}
H.-L. Huang, H. Lu, Y. Ye, C. Zhang, On centers and direct sum decompositions of higher degree forms, Linear Multilinear Algebra, DOI: 10.1080/03081087.2021.1985057.


\bibitem{kr}
J. Kung, G.-C. Rota, The invariant theory of binary forms, Bull. Amer. Math. Soc. (N.S.) 10 (1984), no. 1, 27--85.

\bibitem{l}
S. Lang, Algebra. Revised third edition. Graduate Texts in Mathematics, 211. Springer-Verlag, New York, 2002. 

%

\bibitem{vdw}
B.L.  van der Waerden, A history of algebra. From al-Khw\={a}rizm\={\i} to Emmy Noether. Springer-Verlag, Berlin, 1985.

\bibitem{w}
N. Wallach, Completing the cube and the hypercube, preprint 2021, available at: https://www.researchgate.net/publication/348885013\_Completing\_the\_cube\_and\_the\_hypercube

\end{thebibliography}
\end{document}